\documentclass[a4paper]{amsart}

\usepackage[T1]{fontenc}
\usepackage{lmodern}

\usepackage{
    amssymb,
    booktabs,
}

\usepackage{imakeidx}

\usepackage{hyperref}
\hypersetup{colorlinks, allcolors=blue}
\usepackage[capitalise]{cleveref}
\crefdefaultlabelformat{#2\textup{#1}#3} 

\usepackage[backend=biber,giveninits=true,uniquename=false,doi=true]{biblatex}
\renewbibmacro{in:}{}
\DeclareFieldFormat{pages}{#1}

\usepackage[pagewise]{lineno}

\usepackage{orcidlink}

\theoremstyle{plain}
\newtheorem{mainthm} {Theorem}

\newtheorem{maincor} [mainthm]{Corollary}
\newtheorem{thm} {Theorem} [section]

\newtheorem{lem}    [thm] {Lemma}
\theoremstyle{definition}

\newtheorem{rmk}    [thm] {Remark}

\crefname{main}{Theorem}{Theorems}
\crefname{thm}{Theorem}{Theorems}
\crefname{lem}{Lemma}{Lemmas}
\crefname{prop}{Proposition}{Propositions}
\crefname{cor}{Corollary}{Corollaries}
\crefname{rmk}{Remark}{Remarks}
\crefname{tbl}{Table}{Tables}

\numberwithin{equation}{section}

\DeclareMathOperator{\GL}{GL}
\DeclareMathOperator{\Irr}{Irr}
\DeclareMathOperator{\IBr}{IBr}
\DeclareMathOperator{\I}{I}
\DeclareMathOperator{\cl}{cl}
\DeclareMathOperator{\rank}{rank}
\DeclareMathOperator{\diag}{diag}
\renewcommand{\phi}{\varphi}
\renewcommand{\setminus}{\smallsetminus}
\newcommand{\pp}{\mathfrak{p}}
\newcommand{\OO}{\mathcal{O}}
\newcommand{\C}{\mathbf{C}}
\newcommand{\QQ}{\mathbb{Q}}
\newcommand{\ZZ}{\mathbb{Z}}

\title[Elementary divisors of the Cartan matrix]{Elementary divisors of the Cartan matrix for\\ partial characters}
\author[K.~Dastouri]{Kaveh Dastouri\,\orcidlink{0000-0001-8075-5515}}
\address[Kaveh Dastouri]
{Beijing International Center for Mathematical Research,
Peking University, Beijing, 100871, China}
\email{k.dastouri@bicmr.pku.edu.cn}

\author[T.~Sakurai]{Taro Sakurai\,\orcidlink{0000-0003-0608-1852}}
\address[Taro Sakurai]
{Department of Mathematics and Informatics, Graduate School of Science, Chiba University, 1-33 Yayoi-cho, Inage-ku, Chiba-shi, Chiba, 263-8522, Japan.}
\email{tsakurai@math.s.chiba-u.ac.jp}

\subjclass[2020]{Primary 20C15; Secondary 20C20}
\keywords{Cartan matrix, elementary divisors, determinant, $\pi$-partial characters, $\pi$-separable groups}
\date{\today}

\begin{document}

\begin{abstract}
  Let $\pi$ be a set of primes and let $G$ be a finite $\pi$-separable group.
  We prove that the Cartan matrix $C$ for the $\pi$-partial characters of $G$ is equivalent over the integers to a matrix $\operatorname{diag}(|\mathbf{C}_G(x)|_{\pi'})$, where $x$ runs over a set of representatives of the $\pi$-classes of $G$.
  In particular, we prove that $\det C = \prod |\mathbf{C}_G(x)|_{\pi'}.$
\end{abstract}
\index{$\pi$ : Set}
\index{$\pi'$ : Set}
\index{$G$ : Group}
\index{$C$ : Matrix}
\index{$x$ : Group Element}
\index{$C_G(x)$ @ $\C_G(x)$ : Group}
\maketitle


\section{Introduction}
Brauer proved in \cite[Theorem~1]{Brauer41} that the determinant of the Cartan matrix $C$ for the $p$-Brauer characters of a finite group $G$ is a power of $p$.
In fact, Brauer and Nesbitt determined the elementary divisors\footnote{We follow the terminology of \cite[p.~68]{NagaoTsushima89}. These are also called invariant factors in \cite{Bourbaki90}, for example.} of the Cartan matrix $C$ in \cite[\S16]{BrauerNesbitt41}.
These are the orders of the Sylow $p$-subgroups of the centralizers $\C_G(x)$, where $x$ runs over a set of representatives of the $p'$-classes of $G$.
This result reveals a connection between the structure of projective modules and Sylow $p$-subgroups.
Modern accounts appear in \cite[Theorem~3.6.32]{NagaoTsushima89} and \cite[Exercise~18.5(b)]{Serre77}.

Isaacs \cite{Isaacs84} began to generalize the $p$-Brauer characters of finite $p$-solvable groups by replacing $p'$-classes with $\pi$-classes, where $\pi$ is a set of primes.
The Cartan matrix is also defined for the $\pi$-partial characters of a finite $\pi$-separable group.
Isaacs proved in \cite[Proposition~10.1]{Isaacs86} that the determinant of the Cartan matrix is a $\pi'$-number.
In this paper, we establish $\pi$-analogs of the classical results on the determinant and the elementary divisors.
Recall that two matrices of the same size are called equivalent if one is obtained from the other by multiplication with unimodular matrices on the left and right.
For a finite $\pi$-separable group $G$, let $\cl(G^0)$ denote a set of representatives of the $\pi$-classes.
\index{$cl(G^0)$ @ $\cl(G^0)$ : Set}

\begin{mainthm}
  \label{main:thm}
  Let $\pi$ be a set of primes and let $G$ be a finite $\pi$-separable group.
  Then the Cartan matrix $C$ for the $\pi$-partial characters of $G$ is equivalent over the integers to a matrix $\diag(|\C_G(x)|_{\pi'})_{x \in \cl(G^0)}$.
  In particular,
  \[
    \det C = \prod_{x \in \cl(G^0)} |\C_G(x)|_{\pi'}.
  \]
\end{mainthm}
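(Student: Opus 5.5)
Following the classical Brauer--Nesbitt argument, I would factor $C$ through the decomposition matrix and compare it with a matrix built from class functions. Let $\I_\pi(G)$ be Isaacs' $\pi$-partial characters, a $\ZZ$-basis of the $\ZZ$-module of $\ZZ$-combinations of restrictions $\chi^0$ of ordinary characters to $\pi$-elements. Let $D$ be the decomposition matrix, so that $\chi^0=\sum_\phi d_{\chi\phi}\phi$ and $C=D^{T}D$. Let $\Phi_\phi=\sum_\chi d_{\chi\phi}\chi$ be the projective indecomposable characters. Isaacs' results give three facts I will use: each $\Phi_\phi$ vanishes off $\pi$-elements; $|\I_\pi(G)|=|\cl(G^0)|$; and the $\ZZ$-span of the $\Phi_\phi$ is exactly the set of generalized characters vanishing off $\pi$-elements. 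I expect this last fact, the $\pi$-analog of the characterization of projective characters, to be the delicate input, and I would cite it from Isaacs' theory of $\pi$-partial characters in $\pi$-separable groups.

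The strategy is to compute the elementary divisors of two lattices of class functions on the $\pi$-elements. Let $L$ be the $\ZZ$-span of $\I_\pi(G)$, equivalently of the $\chi^0$. Let $P$ be the $\ZZ$-span of the restrictions of the $\Phi_\phi$. Since $\Phi_\phi=\sum_\psi c_{\psi\phi}\psi$ on $\pi$-elements, $P\subseteq L$ with transition matrix $C$. Hence the elementary divisors of $C$ are those of the finite abelian group $L/P$, and it suffices to show
\[
L/P\cong\bigoplus_{x\in\cl(G^0)}\ZZ/|\C_G(x)|_{\pi'}\ZZ .
\]

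To get this, I would use the inner product $\langle\alpha,\beta\rangle=|G|^{-1}\sum_{g\in G^0}\alpha(g)\overline{\beta(g)}$ on $\pi$-class functions. The basis $\{\Phi_\phi\}$ is dual to $\{\phi\}$, so $P$ is the dual lattice of $L$. I would then identify $L$ concretely as the set of class functions $\theta$ on $G^0$ with values in a suitable ring $R$ of integers (localized at $\pi'$-primes, or working prime by prime) satisfying a congruence structure. It is cleaner to work locally at each prime $q\notin\pi$, since the elementary divisors are $\pi'$-numbers by Isaacs' determinant result. The key local claim is this: over $\ZZ_{(q)}$, $L\otimes\ZZ_{(q)}$ equals all $\ZZ_{(q)}$-valued class functions on $G^0$, and $P\otimes\ZZ_{(q)}$ equals all class functions $\theta$ with $\theta(x)\in|\C_G(x)|_q\ZZ_{(q)}$. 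Given this, $L/P$ localized at $q$ is $\bigoplus_x\ZZ/|\C_G(x)|_q$, and assembling over $q$ gives the theorem.

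For $L$, I would use the characteristic functions of $\pi$-classes. By Isaacs' theory, $\I_\pi(G)$ is a basis of all class functions on $G^0$, and the character-table matrix $(\phi(x))$ has determinant whose square divides $\prod|\C_G(x)|$. Up to the relevant prime $q$ this is a unit, since the column orthogonality relations for $\I_\pi$ and $\Phi$ give $\sum_\phi\phi(x)\overline{\Phi_\phi(y)}=\delta_{xy}|\C_G(x)|$. So the $q$-part comes only from the $\Phi$ side. The dual description of $P$ then follows from duality with respect to $\langle\,,\rangle$. The integrality claim for $L$ requires showing that the characteristic function of a $\pi$-class is $q$-integrally a combination of the $\phi$. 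I would argue this via induction from $\pi$-Hall subgroups $H$: in a $\pi$-separable group, every $\pi$-element lies in a conjugate of $H$, and restrictions $\chi^0$ contain the inductions of characters of $H$, whose index $|G:H|$ is a $\pi'$-number. This reduction, which makes $q$-local information come only from $H$, is where I expect the main work. The determinant formula then follows immediately by taking products.
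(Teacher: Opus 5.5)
\textbf{There is a genuine gap.}

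Your global set-up agrees with the paper's. $L/P$ is the cokernel $A$ of $C$, $P$ is the dual lattice of $L$ by the first orthogonality relation, and one works one prime $q\in\pi'$ at a time. But your key local claim is false as stated, because $\pi$-partial characters are not rational-valued in general. Already for $G$ cyclic of order $3$ and $\pi=\{3\}$, the lattice $L=\bigoplus_{\chi\in\Irr(G)}\ZZ\chi$ contains functions taking a primitive cube root of unity as a value. Moreover, the characteristic function of a non-identity element is not even in the $\QQ$-span of $L$. So $L\otimes\ZZ_{(q)}$ is not the lattice of $\ZZ_{(q)}$-valued class functions on $G^0$, and your description of $P\otimes\ZZ_{(q)}$ fails for the same reason. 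The correct local statement lives over $\OO_\pp$, for a prime $\pp$ of $\OO$ lying over $q$. It says that the $\OO_\pp$-span of $\I_\pi(G)$ is the set of all $\OO_\pp$-valued class functions on $G^0$, i.e.\ $\Phi=[\phi(x)]\in\GL_\ell(\OO_\pp)$. Together with $\Phi^\dagger C\Phi=\Delta$ this gives $C\sim\Delta$ over $\OO_\pp$. You then need a separate descent step back to $\ZZ_{(q)}$; the paper compares determinantal divisors, using $n\OO_\pp\cap\QQ=n\ZZ_{(q)}$.

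The more serious gap is that you never prove $\det\Phi$ is a $\pp$-unit, and this is the heart of the theorem. The orthogonality relations give only $|\det\Phi|^2\det C=\prod_{x\in\cl(G^0)}|\C_G(x)|$. They say nothing about how the $q$-part of the right side splits between the two factors. Given Isaacs' result that $\det C$ is a $\pi'$-number, the assertion that $\det\Phi$ is prime to every $q\in\pi'$ is equivalent to the determinant formula itself, so that step is circular.

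Your Hall-subgroup induction does not supply the missing step, because it inverts the wrong primes. Take $H$ a Hall $\pi$-subgroup containing $x$ with $\C_H(x)$ a Hall $\pi$-subgroup of $\C_G(x)$. Then $(\mathbf{1}_{x^H})^G=|\C_G(x)|_{\pi'}\mathbf{1}_{x^G}$, and the factor $|\C_G(x)|_{\pi'}$ (like $|G:H|$) is divisible by $q$ exactly at the primes you care about. So this only yields $|\C_G(x)|_q\mathbf{1}_{x^G}\in L\otimes\OO_\pp$. That is the trivial inclusion, already given by the second orthogonality relation $\sum_{\phi}\overline{\phi(x)}\eta_\phi^0=|\C_G(x)|\mathbf{1}_{x^G}$. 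A $\pi$-subgroup carries no $q$-local information.

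What you need is input at the prime $q$. The paper uses $q$-Brauer characters:
\begin{itemize}
\item Each $\theta\in\IBr_q(G)$ is a $\ZZ$-combination of restrictions of ordinary characters to $q$-regular elements.
\item The $\pi$-classes are $q$-regular, so restricting to them gives $\Theta=M\Phi$ with $M$ integral.
\item The $\theta$ are linearly independent modulo $\pp$ on the $q$-regular classes, so $\Theta$ has rank $\ell$ modulo $\pp$.
\item This forces $\Phi$ to be invertible modulo $\pp$.
\end{itemize}
Alternatively, Brauer induction from $q$-elementary subgroups $E=C\times Q$ works. For any $\OO_\pp$-valued class function $f$ on $G^0$, the function $g\mapsto f(g_\pi)$ restricts to each such $E$ as a function of the $q'$-factor $C$ alone. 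It is therefore an $\OO_\pp$-combination of irreducible characters of $G$, and restricting to $G^0$ puts $f$ in the $\OO_\pp$-span of $\I_\pi(G)$. Once this lemma is in place, your dual description of $P$ and the assembly over $q$ go through as in the paper.
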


\begin{maincor}
  Let $\pi$ be a set of primes and let $G$ be a finite $\pi$-separable group.
  If $p \in \pi'$, then the $p$-parts of the elementary divisors of the Cartan matrix for the $\pi$-partial characters of $G$ are $|\C_G(x)|_p$, where $x \in \cl(G^0)$.
\end{maincor}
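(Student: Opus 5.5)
The plan is to deduce the corollary directly from \cref{main:thm} by localizing at the prime $p$, where elementary divisors become easy to read off. The point is that the diagonal matrix in \cref{main:thm} is not in general in Smith normal form: its entries need not form a divisibility chain. So its entries are not literally the elementary divisors of $C$. After localizing at $p$, however, every entry is a power of $p$ times a unit, and powers of $p$ can always be reordered into a divisibility chain.

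First I fix notation. Let $d_1 \mid d_2 \mid \cdots \mid d_n$ be the elementary divisors of $C$ over $\ZZ$, so that $C = U \diag(d_i) V$ with $U, V$ unimodular over $\ZZ$. By \cref{main:thm}, $C$ is also equivalent over $\ZZ$ to $D = \diag(|\C_G(x)|_{\pi'})_{x \in \cl(G^0)}$. Since $\det C \neq 0$ by \cref{main:thm}, all $d_i$ are nonzero.

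Next I pass to the local ring $\ZZ_{(p)}$, which is a PID; integer matrices unimodular over $\ZZ$ remain unimodular over it. Because $p \in \pi'$, we have $|\C_G(x)|_{\pi'} = |\C_G(x)|_p \cdot u_x$, where $u_x$ is a product of primes other than $p$ and hence a unit in $\ZZ_{(p)}$. Thus over $\ZZ_{(p)}$ the matrix $C$ is equivalent to $\diag(|\C_G(x)|_p)$. Permuting rows and columns, I may arrange these powers of $p$ in increasing order, which gives a Smith normal form over $\ZZ_{(p)}$.

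On the other side, writing $d_i = (d_i)_p \cdot (\text{unit})$, the matrix $C$ is equivalent over $\ZZ_{(p)}$ to $\diag((d_i)_p)$. This is again in Smith form, since $d_i \mid d_{i+1}$ implies $(d_i)_p \mid (d_{i+1})_p$. By uniqueness of Smith normal form over the PID $\ZZ_{(p)}$ (entries are unique up to units, and positive powers of $p$ are determined by their associate class), the multisets $\{(d_i)_p\}$ and $\{|\C_G(x)|_p : x \in \cl(G^0)\}$ coincide, which is the claim.

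The only step needing care is the fact that $D$ is not itself in Smith form over $\ZZ$. Localizing at $p$ is exactly what sidesteps this, so I expect no real obstacle beyond stating the uniqueness theorem over $\ZZ_{(p)}$ correctly.
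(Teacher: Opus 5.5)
Your proof is correct and takes the same approach as the paper. The paper says only that the corollary follows immediately from \cref{main:thm}; your localization at $p$, followed by uniqueness of the Smith normal form over $\ZZ_{(p)}$, makes that deduction explicit (and the proof of \cref{lem:components} in fact already shows $C$ is equivalent over $\ZZ_{(p)}$ to $\diag(|\C_G(x)|)$, hence to $\diag(|\C_G(x)|_p)$).
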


\index{$p$ : Prime}

\begin{maincor}
  \label{main:largest}
  Let $\pi$ be a set of primes and let $G$ be a finite $\pi$-separable group.
  Then the largest elementary divisor of the Cartan matrix for the $\pi$-partial characters of $G$ is equal to $|G|_{\pi'}$.
\end{maincor}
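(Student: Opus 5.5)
The plan is to read the corollary off \cref{main:thm} by passing to cokernels. Let $n = |\cl(G^0)|$. By \cref{main:thm}, $C$ is equivalent over $\ZZ$ to $D = \diag(|\C_G(x)|_{\pi'})_{x \in \cl(G^0)}$. Equivalent integer matrices have the same elementary divisors. Moreover, the elementary divisors $e_1 \mid e_2 \mid \dots \mid e_n$ of a nonsingular integer matrix $M$ describe its cokernel: $\ZZ^n / M\ZZ^n \cong \bigoplus_i \ZZ/e_i\ZZ$. Here $C$ is nonsingular, since $\det C = \det D \neq 0$ up to sign.

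From this decomposition, the largest elementary divisor $e_n$ is the exponent of the finite abelian group $\ZZ^n/M\ZZ^n$. Since $D$ is diagonal, its cokernel is $\bigoplus_{x} \ZZ/|\C_G(x)|_{\pi'}\ZZ$. The exponent of this group is
\[
  \operatorname{lcm}_{x \in \cl(G^0)} |\C_G(x)|_{\pi'}.
\]
Equivalent matrices have isomorphic cokernels, so this lcm is the largest elementary divisor of $C$.

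It remains to compute this lcm. For every $x$, $\C_G(x) \le G$, so $|\C_G(x)|$ divides $|G|$ and hence $|\C_G(x)|_{\pi'}$ divides $|G|_{\pi'}$. Thus the lcm divides $|G|_{\pi'}$. Conversely, the identity is a $\pi$-element and forms its own class, so $1 \in \cl(G^0)$ and the term $|\C_G(1)|_{\pi'} = |G|_{\pi'}$ occurs. Hence the lcm equals $|G|_{\pi'}$.

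No step here is a real obstacle; all the substance is in \cref{main:thm}. The only point needing care is the invariance of elementary divisors under equivalence, together with their identification with the invariant factors of the cokernel. This is standard Smith normal form theory, and I would simply cite \cite{NagaoTsushima89}.
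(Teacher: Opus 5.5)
Your argument is correct and is exactly the deduction the paper intends when it says the corollaries follow immediately from \cref{main:thm}. The largest elementary divisor is the exponent of the cokernel $\bigoplus_{x} \ZZ/|\C_G(x)|_{\pi'}\ZZ$, and this exponent is $|G|_{\pi'}$ because the class of $1$ contributes $|G|_{\pi'}$ and every other $|\C_G(x)|_{\pi'}$ divides it. One minor notational point: avoid calling the diagonal matrix $D$, since the paper already uses $D$ for the decomposition matrix.
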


These corollaries follow immediately from the theorem.
If $p$ is a prime, $\pi = p'$ and $G$ is a finite $p$-solvable group, then the $\pi$-partial characters coincide with the $p$-Brauer characters by the Fong--Swan theorem.
Hence our results recover the classical theorems in this case.
\cref{main:largest} also generalizes a recent result by Iiyori--Kiyota--Sawabe \cite[Theorem~2.8]{IiyoriKiyotaSawabe26} from finite nilpotent groups to finite $\pi$-separable groups.

\section{Preliminaries}
Let $\pi$ be a set of primes.
The complement of $\pi$ in the set of all primes is denoted by $\pi'$.
For a positive integer $n$, the $\pi$-part $n_\pi$ is the largest divisor of $n$ whose prime divisors lie in $\pi$.
A positive integer $n$ is a $\pi$-number if $n = n_{\pi}$.

Let $G$ be a finite group.
For an element $x$ of $G$, the centralizer of $x$ is denoted by $\C_G(x)$ and the conjugacy class of $x$ is denoted by $x^G$.
\index{$C_G(x)$ @ $\C_G(x)$ : Group}
\index{$x^G$ : Set}
An element $x$ of $G$ is a $\pi$-element if its order is a $\pi$-number.
The set of $\pi$-elements is denoted by $G^0$ and $G$ is a $\pi$-group if its order is a $\pi$-number.
\index{$G^0$ : Set}
Every element $x$ of $G$ can be written uniquely as $x = x_\pi x_{\pi'} = x_{\pi'} x_\pi$ with a $\pi$-element $x_\pi$ and a $\pi'$-element $x_{\pi'}$.
A conjugacy class consisting of $\pi$-elements is a $\pi$-class.
Let $\cl(G^0)$ denote a set of representatives of the $\pi$-classes of $G$.
Recall that $G$ is called $\pi$-separable if there is a normal series
\[
  1 = N_0 \leq N_1 \leq \dotsb \leq N_k = G
\]
such that each $N_i/N_{i - 1}$ is a $\pi$-group or a $\pi'$-group for $1 \leq i \leq k$.
\index{$i$ : Integer}
\index{$k$ : Integer}
\index{$N_i$ : Group}
Throughout this paper, we assume that $\pi$ is a set of primes and $G$ is a finite $\pi$-separable group.

\subsection*{Partial characters}
Let $\Irr(G)$ denote the set of irreducible characters of $G$.
\index{$Irr(G)$ @ $\Irr(G)$ : Set}
For a virtual character $\psi$ of $G$, the restriction of $\psi$ to the $\pi$-elements $G^0$ is denoted by $\psi^0$.
\index{$\psi$ : Virtual Ordinary Character}
\index{$\psi^0$ : Partial Character}
Restrictions of characters to the $\pi$-elements are called $\pi$-partial characters.
A $\pi$-partial character is irreducible if it cannot be written as a sum of two $\pi$-partial characters.
The set of irreducible $\pi$-partial characters of $G$ is denoted by $\I_\pi(G)$.
\index{$I_\pi(G)$ @ $\I_\pi(G)$ : Set}
The restriction of an irreducible character $\chi$ of $G$ can be written as a linear combination
\[
  \chi^0 = \sum_{\phi \in \I_\pi(G)} d_{\chi\phi} \phi
\]
\index{$\chi$ : Ordinary Character}
\index{$\phi$ : Partial Character}
with non-negative integer coefficients $d_{\chi\phi}$, called the decomposition numbers.
\index{$d_{\chi\phi}$ : Integer}
Since $G$ is $\pi$-separable, $\I_\pi(G)$ is linearly independent \cite[Theorem~3.3]{Isaacs18} and the decomposition numbers $d_{\chi\phi}$ are uniquely determined.
We call
\[
  D = [d_{\chi\phi}]_{\chi \in \Irr(G), \phi \in \I_\pi(G)}
\]
the decomposition matrix, which has full rank.
\index{$D$ : Matrix}
For $\phi \in \I_\pi(G)$, the projective indecomposable character $\eta_\phi$ is defined by
\[
  \eta_\phi = \sum_{\chi \in \Irr(G)} d_{\chi\phi}\chi.
\]
\index{$\eta_\phi$ : Ordinary Character}
See \cite[Chapter~3]{Isaacs18} for the theory of $\pi$-partial characters.

\begin{lem}[Second orthogonality relation]
  \label{lem:second}
  \[
    \sum_{\phi \in \I_\pi(G)} \phi(x) \overline{\eta_\phi(y)} = \delta_{x^G y^G}|\C_G(x)| \qquad (x, y \in G^0).
  \]
\end{lem}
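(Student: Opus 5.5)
We derive the relation from the ordinary second orthogonality relation together with a structural property of the decomposition matrix for $\pi$-separable groups. By the definition of $\eta_\phi$ and of the decomposition numbers,
\[
  \sum_{\phi \in \I_\pi(G)} \phi(x)\overline{\eta_\phi(y)}
  = \sum_{\phi}\sum_{\chi \in \Irr(G)} \phi(x)\, d_{\chi\phi}\overline{\chi(y)}
  = \sum_{\chi \in \Irr(G)} \Bigl(\sum_{\phi} d_{\chi\phi}\phi(x)\Bigr)\overline{\chi(y)}
  = \sum_{\chi \in \Irr(G)} \chi(x)\overline{\chi(y)},
\]
since $\chi^0(x) = \chi(x)$ for the $\pi$-element $x$. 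The ordinary second orthogonality relation evaluates the right-hand side as $\delta_{x^G y^G}|\C_G(x)|$. This already finishes the proof, and no $\pi$-separability input beyond the existence and uniqueness of the decomposition numbers is needed.

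The key steps, in order, are the following. First, expand $\eta_\phi$ as $\sum_\chi d_{\chi\phi}\chi$; the coefficients $d_{\chi\phi}$ are nonnegative integers, hence real, so complex conjugation passes onto $\chi(y)$ alone. Second, interchange the two finite sums. Third, recognize the inner sum $\sum_\phi d_{\chi\phi}\phi(x)$ as $\chi^0(x) = \chi(x)$. Fourth, apply the classical column orthogonality $\sum_{\chi} \chi(x)\overline{\chi(y)} = \delta_{x^Gy^G}|\C_G(x)|$, which holds for all $x, y \in G$ and in particular for $x, y \in G^0$.

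The only potential obstacle is making sure the conjugation lands on $\chi(y)$ rather than on $\phi$. If one instead expanded $\phi(x)$ in terms of ordinary characters, one would need an inverse of $D$, which does not exist integrally. The placement of the bar in the statement on $\eta_\phi(y)$ is exactly what makes the direct computation above work. Since the $d_{\chi\phi}$ are rational integers, there is no subtlety, and I expect the argument to be a short direct verification.
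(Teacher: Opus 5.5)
Your proposal is correct and is essentially the paper's own proof. You expand $\eta_\phi$ via the decomposition numbers, interchange the sums, recover $\chi(x)$ from $\chi^0 = \sum_\phi d_{\chi\phi}\phi$, and then apply the ordinary second orthogonality relation.
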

\index{$y$ : Group Element}
\begin{proof}
  By the second orthogonality relation for irreducible characters,
  \begin{align*}
    \sum_{\phi \in \I_\pi(G)} \phi(x) \overline{\eta_\phi(y)}
    &= \sum_{\phi \in \I_\pi(G)} \phi(x) \left(\sum_{\chi \in \Irr(G)} d_{\chi\phi} \overline{\chi(y)}\right) \\
    &= \sum_{\chi \in \Irr(G)} \left(\sum_{\phi \in \I_\pi(G)} d_{\chi\phi} \phi(x)\right) \overline{\chi(y)} \\
    &= \sum_{\chi \in \Irr(G)} \chi(x) \overline{\chi(y)}
    = \delta_{x^G y^G}|\C_G(x)|.
    \qedhere
  \end{align*}
\end{proof}
\index{$\delta$ : Function}

The second orthogonality relation yields the first orthogonality relation
\begin{equation}
  \label{eq:first}
  \sum_{x \in G^0} \eta_\phi(x)\overline{\mu(x)} = \delta_{\phi\mu}|G|
  \qquad
  (\phi, \mu \in \I_\pi(G))
\end{equation}
as in the proof of \cite[Theorem~2.13]{Navarro98}.
The relation \eqref{eq:first} already appears in \cite[Exercise~3.5]{Isaacs18}.

\subsection*{Virtual characters}
For complex functions $\alpha$ and $\beta$ defined on a subset $S$ of $G$, we write
\[
  \langle \alpha, \beta \rangle_S = \frac{1}{|G|}\sum_{x \in S} \alpha(x)\overline{\beta(x)}.
\]
\index{$\alpha$ : Function}
\index{$\beta$ : Function}
For a $\pi$-partial character $\mu$ of $G$, define
\[
  \tilde\mu(x) = \mu(x_\pi) \quad (x \in G)
  \qquad\text{and}\qquad
  \hat\mu(x) =
  \begin{cases}
    |G|_{\pi'}\mu(x) & (x \in G^0) \\
    0                & (x \not\in G^0).
  \end{cases}
\]
\index{$\mu$ : Partial Character}
\index{$\tilde\mu$ : Virtual Ordinary Character}
\index{$\hat\mu$ : Virtual Ordinary Character}
The following are consequences of Brauer's characterization of virtual characters.

\begin{lem}
  \label{lem:virtual}
  If $\mu$ is a $\pi$-partial character, then $\tilde\mu$ and $\hat\mu$ are virtual characters.
\end{lem}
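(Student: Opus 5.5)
We apply Brauer's characterization of virtual characters. A class function $\theta$ on $G$ is a virtual character if and only if its restriction $\theta_E$ to every elementary subgroup $E$ of $G$ is a virtual character of $E$. An elementary subgroup is a direct product $E = P \times Z$ with $Z$ cyclic and $P$ a $q$-group for some prime $q$. Both $\tilde\mu$ and $\hat\mu$ are class functions on $G$, because $(x^g)_\pi = (x_\pi)^g$ and $G^0$ is a union of classes. So it suffices to check the restrictions to each such $E$. Write $\mu = \psi^0$ with $\psi$ a character of $G$, which we may do by the definition of $\pi$-partial characters (every $\pi$-partial character is a nonnegative combination of restrictions, hence a restriction of a character).

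For $\tilde\mu$: the restriction $\psi_E$ is a character of $E$, and $\tilde\mu_E(x) = \psi_E(x_\pi)$. Since $E$ is nilpotent, $E = E_\pi \times E_{\pi'}$, where $E_\pi$ and $E_{\pi'}$ are the Hall subgroups. The map $x \mapsto x_\pi$ is then the projection homomorphism $E \to E_\pi$. Hence $\tilde\mu_E$ is the inflation of the character $\psi_{E_\pi}$ along this projection, so it is a genuine character of $E$. Thus $\tilde\mu$ is a virtual character.

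For $\hat\mu$: on $E = E_\pi \times E_{\pi'}$ we have $\hat\mu_E = |G|_{\pi'}\,\psi_{E_\pi} \times \delta$. Here $\delta$ is the function on $E_{\pi'}$ that equals $1$ at the identity and $0$ elsewhere. Now $|E_{\pi'}|\delta$ is the regular character of $E_{\pi'}$, and $|E_{\pi'}|$ divides $|G|_{\pi'}$. Therefore
\[
  \hat\mu_E = \tfrac{|G|_{\pi'}}{|E_{\pi'}|}\, \psi_{E_\pi} \times \rho_{E_{\pi'}}
\]
with an integer coefficient, which is a genuine character. Brauer's theorem then gives that $\hat\mu$ is a virtual character.

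The only real point to get right is the reduction to elementary subgroups, specifically that the $\pi$-decomposition on an elementary (indeed any nilpotent) group splits as a direct product. Once that is in place, both verifications are direct. Note that $\pi$-separability is not needed for this lemma.
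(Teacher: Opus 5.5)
Your proposal is correct and is essentially the paper's proof: you use Brauer's characterization to reduce to elementary subgroups $E = E_\pi \times E_{\pi'}$, then observe that $\tilde\mu_E = \psi_{E_\pi} \times 1_{E_{\pi'}}$ and $\hat\mu_E = \frac{|G|_{\pi'}}{|E_{\pi'}|}\psi_{E_\pi} \times \rho_{E_{\pi'}}$ are characters of $E$. Your extra checks (that both functions are class functions, and that $x \mapsto x_\pi$ is the projection onto $E_\pi$) are details the paper leaves implicit.
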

\begin{proof}
  Let $\mu$ be a $\pi$-partial character of $G$ and let $E$ be an elementary subgroup of $G$.
  Write $E = P \times Q$ where $P$ is a $\pi'$-subgroup and $Q$ is a $\pi$-subgroup of $E$.
  \index{$E$ : Group}
  \index{$P$ : Group}
  \index{$Q$ : Group}
  Then the restriction $\mu_Q$ of $\mu$ to $Q$ is a character of $Q$.
  Let $1_P$ denote the principal character of $P$ and $\rho_P$ the regular character of $P$.
  \index{$1_P$ : Ordinary Character}
  \index{$\rho_P$ : Ordinary Character}
  Then the restrictions of $\tilde\mu$ and $\hat\mu$ to $E$ can be written as
  \[
    \tilde\mu_E = 1_P \times \mu_Q
    \qquad\text{and}\qquad
    \hat\mu_E   = \frac{|G|_{\pi'}}{|P|}\rho_P \times \mu_Q,
  \]
  and both are virtual characters of $E$.
  By Brauer's characterization of virtual characters \cite[Theorem~3.4.2(i)]{NagaoTsushima89}, $\tilde\mu$ and $\hat\mu$ are virtual characters of $G$.
\end{proof}

\begin{lem}
  \label{lem:vanish}
  \[
    \left\{\, \psi \in \bigoplus_{\chi \in \Irr(G)} \ZZ \chi \,\middle|\, \psi(x) = 0 \quad (x \not\in G^0) \,\right\}
    = \bigoplus_{\phi \in \I_\pi(G)} \ZZ \eta_\phi.
  \]
\end{lem}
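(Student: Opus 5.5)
We prove the two inclusions separately. The inclusion $\supseteq$ is immediate. Each $\eta_\phi$ is a character, hence lies in $\bigoplus \ZZ\chi$. To see that $\eta_\phi$ vanishes off $G^0$, fix $y \notin G^0$. Since $y$ is conjugate to no $\pi$-element, \cref{lem:second} gives
\[
  \sum_{\phi} \phi(x)\overline{\eta_\phi(y)} = \sum_{\chi \in \Irr(G)} \chi(x)\overline{\chi(y)} = 0
\]
for every $x \in G^0$. The computation in the proof of \cref{lem:second} does not use that $y$ is a $\pi$-element, so it applies here. Because $\I_\pi(G)$ is linearly independent as functions on $G^0$, it follows that $\overline{\eta_\phi(y)} = 0$ for all $\phi$.

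For $\subseteq$, let $\psi$ be a virtual character vanishing off $G^0$. Consider the restriction $\psi^0$. It is a $\ZZ$-combination of the $\chi^0$, and hence a $\ZZ$-combination of the $\phi \in \I_\pi(G)$. We first look for coefficients of $\psi$ in the basis $\{\eta_\phi\}$. Set
\[
  a_\phi = \frac{1}{|G|}\sum_{x \in G^0} \psi(x)\overline{\phi(x)} = \langle \psi, \phi\rangle_{G^0}.
\]
We claim $\psi = \sum_\phi a_\phi \eta_\phi$. Both sides vanish off $G^0$, so it suffices to compare values on $G^0$. For $y \in G^0$, \cref{lem:second} gives
\[
  \sum_\phi a_\phi \eta_\phi(y) = \frac{1}{|G|}\sum_{x\in G^0}\psi(x)\,\overline{\sum_\phi \phi(x)\overline{\eta_\phi(y)}} = \psi(y).
\]
Here we used $\overline{\delta |\C_G(x)|} = \delta|\C_G(x)|$ and the fact that the class of $y$ has $|G|/|\C_G(y)|$ elements.

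It remains to show that each $a_\phi$ is an integer, and this is the main step. We use \cref{lem:virtual}: for $\phi \in \I_\pi(G)$, the function $\tilde\phi$ is a virtual character of $G$. Since $\psi$ vanishes off $G^0$ and $\tilde\phi$ agrees with $\phi$ on $G^0$,
\[
  a_\phi = \frac{1}{|G|}\sum_{x\in G}\psi(x)\overline{\tilde\phi(x)} = \langle \psi, \tilde\phi\rangle_G .
\]
This is the inner product of two virtual characters, hence an integer. Thus $\psi \in \bigoplus \ZZ\eta_\phi$.

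The sum $\sum \ZZ\eta_\phi$ is direct because the $\eta_\phi$ are linearly independent. Indeed, \eqref{eq:first} gives $\langle \eta_\phi, \mu\rangle_{G^0} = \delta_{\phi\mu}$; equivalently, $D$ has full rank. The only nontrivial input is the integrality of $a_\phi$, and it comes from $\tilde\phi$ being a virtual character.
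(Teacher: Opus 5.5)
Your proof is correct and matches the paper's in its key step: the coefficients $a_\phi = \langle \psi, \phi \rangle_{G^0} = \langle \psi, \tilde\phi \rangle_G$ are integers by \cref{lem:virtual}. The only difference is that you derive the vanishing of each $\eta_\phi$ off $G^0$ and the expansion $\psi = \sum_\phi a_\phi \eta_\phi$ directly from \cref{lem:second} and the linear independence of $\I_\pi(G)$, whereas the paper quotes both facts from Corollary~3.8 of Isaacs's book and then identifies $a_\phi$ via \eqref{eq:first}.
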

\index{$\psi$ : Virtual Ordinary Character}
\index{$Z$ @ $\ZZ$ : Integers}
\begin{proof}
  The right-hand side is contained in the left-hand side by \cite[Corollary~3.8]{Isaacs18}.
  Let $\psi$ be a virtual character of $G$ such that $\psi(x) = 0$ for $x \not\in G^0$.
  By \cite[Corollary~3.8]{Isaacs18}, we have $\psi = \sum_{\phi \in \I_\pi(G)} a_\phi \eta_\phi$ for some complex numbers $a_\phi$.
  \index{$a_\phi$ : Integer}
  On the one hand,
  \[
    \langle \psi^0, \phi \rangle_{G^0}
    = \sum_{\mu \in \I_\pi(G)} a_\mu \langle \eta_\mu^0, \phi \rangle_{G^0}
    = a_\phi
  \]
  by the first orthogonality relation \eqref{eq:first}.
  On the other hand, by \cref{lem:virtual}, $\langle \psi^0, \phi \rangle_{G^0} = \langle \psi, \tilde\phi \rangle_G$ is an integer.
  Thus the coefficients $a_\phi$ are integers.
\end{proof}

\section{Proof}
Throughout this section, we use the following notation.
Let $m$ be the exponent of $G$.
\index{$m$ : Integer}
Fix a number field $K$ containing a primitive $m$th root of unity with the ring of integers $\OO$.
\index{$K$ : Number Field}
\index{$O$ @ $\OO$ : Integer Ring}
Set $\ell = |\cl(G^0)|$.
\index{$l$ @ $\ell$ : Integer}
As $G$ is $\pi$-separable, \cite[Theorem~3.3]{Isaacs18} gives $\ell  = |\I_\pi(G)|$.
Define
\begin{align*}
  \Phi   &= [\phi(x)]_{\phi \in \I_\pi(G), x \in \cl(G^0)}, \\
  Y      &= [\eta_\phi(x)]_{\phi \in \I_\pi(G), x \in \cl(G^0)}, \\
  \Delta &= \diag(|\C_G(x)|)_{x \in \cl(G^0)}.
\end{align*}
\index{$\Phi$ : Matrix}
\index{$Y$ : Matrix}
\index{$\Delta$ : Matrix}

The Cartan invariants $c_{\phi\mu}$ are defined by
\[
  c_{\phi\mu} = \langle \eta_\phi, \eta_\mu \rangle_G
  \qquad
  (\phi, \mu \in \I_\pi(G))
\]
and we call $C = [c_{\phi\mu}]_{\phi, \mu \in \I_\pi(G)}$ the Cartan matrix.
\index{$c_{\phi\mu}$ : Integer}
By the first orthogonality relation,
\begin{equation}
  \label{eq:cartan}
  c_{\phi\mu} = \sum_{\chi \in \Irr(G)} d_{\chi\phi}d_{\chi\mu}
\end{equation}
and hence
\[
  \eta_\phi^0
  = \sum_{\chi \in \Irr(G)} d_{\chi\phi} \chi^0
  = \sum_{\mu \in \I_\pi(G)} c_{\phi\mu} \mu.
\]
Thus the Cartan matrix represents a homomorphism
\[
  \bigoplus_{\phi \in \I_\pi(G)} \ZZ \eta_\phi \to \bigoplus_{\mu \in \I_\pi(G)} \ZZ \mu
\]
of abelian groups defined by the restriction.
Let
\[
  A = \bigoplus_{\mu \in \I_\pi(G)} \ZZ \mu \bigg/ \bigoplus_{\phi \in \I_\pi(G)} \ZZ \eta_\phi^0
\]
denote the cokernel of the homomorphism.
\index{$A$ : Group}

\subsection*{Primary decomposition}
By the theory of modules over principal ideal domains, two integer matrices of the same size are equivalent if their cokernels are isomorphic.
We prove that the cokernel $A$ is a torsion group and decompose it into $p$-primary components with $p \in \pi'$.
\begin{lem}
  \label{lem:decomposition}
  \[
    A \cong \bigoplus_{p \in \pi'} A_{(p)}.
  \]
\end{lem}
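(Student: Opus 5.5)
The plan is to show that $A$ is a finite abelian group whose order is a $\pi'$-number. The decomposition then follows from the primary decomposition of finite abelian groups: $A \cong \bigoplus_p A_{(p)}$ over all primes $p$, and $A_{(p)} = 0$ for $p \in \pi$. Here $A_{(p)}$ denotes the $p$-primary component of $A$.

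\emph{Step 1: $A$ is finite, i.e.\ $C$ is nonsingular.} From \eqref{eq:cartan} we have $C = D^{T} D$. Since $D$ has full rank $\ell$, $C$ is positive definite and in particular $\det C \neq 0$. Alternatively, the second orthogonality relation (\cref{lem:second}) gives $\Phi^{T}\overline{Y} = \Delta$ up to indexing. Combined with $Y = C\Phi$ (from $\eta_\phi^0 = \sum_\mu c_{\phi\mu}\mu$), this shows that $\Phi$ and $C$ are invertible. Hence the image of $\bigoplus \ZZ\eta_\phi^0$ has full rank in the free module $\bigoplus \ZZ\mu$ of rank $\ell$, so the cokernel $A$ is finite.

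\emph{Step 2: $|G|_{\pi'}$ annihilates $A$.} Let $\mu \in \I_\pi(G)$. By \cref{lem:virtual}, $\hat\mu$ is a virtual character. By construction it vanishes off $G^0$, so \cref{lem:vanish} gives integers $b_\phi$ with $\hat\mu = \sum_\phi b_\phi \eta_\phi$. Restricting to $G^0$ yields
\[
  |G|_{\pi'}\,\mu = \hat\mu^0 = \sum_{\phi} b_\phi \eta_\phi^0 \in \bigoplus_{\phi} \ZZ\eta_\phi^0 .
\]
Therefore $|G|_{\pi'}$ kills every generator of $\bigoplus \ZZ\mu$, and hence kills $A$. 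A finite abelian group annihilated by a $\pi'$-number has $\pi'$-order, so the primary decomposition only involves primes $p \in \pi'$, which proves the lemma.

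The main point requiring care is Step 2, specifically that $\hat\mu$ expands \emph{integrally} in the $\eta_\phi$. That integrality is exactly what \cref{lem:vanish} supplies, together with \cref{lem:virtual}. Everything else is standard linear algebra over $\ZZ$.
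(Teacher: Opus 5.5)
Your proof is correct and follows the paper's argument. The key step is exactly the paper's: by \cref{lem:virtual,lem:vanish}, $|G|_{\pi'}\mu = \hat\mu^0$ lies in $\bigoplus \ZZ\eta_\phi^0$, and the primary decomposition of the resulting torsion group then gives the lemma. Your Step 1 is harmless but unnecessary, since a finitely generated abelian group annihilated by $|G|_{\pi'}$ is automatically finite. Also, the paper's $A_{(p)}$ is the localization $A \otimes_\ZZ \ZZ_{(p)}$, which for such $A$ is isomorphic to your $p$-primary component.
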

\begin{proof}
  First we prove that
  \[
    \bigoplus_{\mu \in \I_\pi(G)} |G|_{\pi'} \ZZ \mu \subseteq \bigoplus_{\phi \in \I_\pi(G)} \ZZ \eta_\phi^0.
  \]
  Let $\mu \in \I_\pi(G)$.
  By \cref{lem:virtual,lem:vanish}, $\hat\mu = \sum_{\phi \in \I_\pi(G)} a_\phi \eta_\phi$ for some integers $a_\phi$.
  Restriction to $\pi$-elements yields $|G|_{\pi'}\mu = \sum_{\phi \in \I_\pi(G)} a_\phi \eta_\phi^0$.

  By the Chinese Remainder Theorem \cite[VII, \S2, No.~2, Theorem~1]{Bourbaki90}, $A$ decomposes into its $p$-primary components, which are isomorphic to $A \otimes_\ZZ \ZZ_{(p)} \cong A_{(p)}$.
\end{proof}

\subsection*{Primary components}
We determine the $p$-primary components of the cokernel $A$.
The next lemma is proved using Brauer's theory.
\begin{lem}
  \label{lem:unimodular}
  For a prime $p \in \pi'$ and a prime ideal $\pp$ of $\OO$ lying over $p$,
  \[
    \Phi \in \GL_\ell(\OO_\pp).
  \]
\end{lem}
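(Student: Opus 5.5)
We need to show $\det\Phi$ is a unit in $\OO_\pp$, i.e.\ $\det \Phi \notin \pp$. The entries of $\Phi$ are algebraic integers in $\OO$, so $\Phi$ has entries in $\OO_\pp$, and it suffices to find a matrix $\Psi$ over $\OO_\pp$ with $\Psi\Phi$ (or $\Phi\Psi$) equal to a matrix whose determinant is a $\pp$-unit.

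The natural candidate comes from \cref{lem:second}. In matrix form it reads $\Phi^{T}\overline{Y} = \Delta$. This alone only gives $\det\Phi\cdot\det\overline{Y} = \prod|\C_G(x)|$, which is not a $\pp$-unit because the centralizer orders may have $\pi'$-parts. So I would replace the projective indecomposable characters by functions whose values on $G^0$ have better denominators.

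The plan is to use the virtual characters $\tilde\mu$ from \cref{lem:virtual}, together with Brauer's theory of characters restricted to $p$-regular elements. Since $p \in \pi'$, every $\pi$-element is $p$-regular. Consider, for each $x \in \cl(G^0)$, the class function $\theta_x$ on $G$ that is the characteristic function of the set of elements $g$ with $g_{p'}$ conjugate to $x$. By Brauer's theory, in the form of the statement that $|\C_G(x)|_{p'}^{-1}$-scaled projections onto $p$-sections are $\OO_\pp$-combinations of irreducible characters, $\theta_x$ is an $\OO_\pp$-linear combination of irreducible characters. Equivalently, $|\C_G(x)|_{p}\,\theta_x$ is a virtual projective and $\theta_x \in \bigoplus_\chi \OO_\pp\chi$, since $|\C_G(x)|_{p'}$ is a $\pp$-unit.

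Write $\theta_x = \sum_\chi b_{x\chi}\chi$ with $b_{x\chi}\in\OO_\pp$. Restricting to $G^0$ and using $\chi^0=\sum_\phi d_{\chi\phi}\phi$ gives
\[
  \theta_x^0=\sum_\phi \Bigl(\sum_\chi b_{x\chi}d_{\chi\phi}\Bigr)\phi .
\]
On $G^0$, $\theta_x$ is exactly the indicator of the class $x^G$, because for $g \in G^0$ we have $g_{p'} = g$. Evaluating at $y\in\cl(G^0)$ gives $\delta_{xy}=\sum_\phi e_{x\phi}\phi(y)$ with $e_{x\phi}\in\OO_\pp$. Thus $E\Phi=I$ for a matrix $E$ over $\OO_\pp$, so $\Phi\in\GL_\ell(\OO_\pp)$.

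The main obstacle is justifying that $\theta_x$ has $\pp$-integral coefficients. I would verify it directly: the coefficient is $\langle\theta_x,\chi\rangle = |G|^{-1}\sum_{g:\,g_{p'}\sim x}\overline{\chi(g)}$. This equals $|\C_G(x)|^{-1}\sum_{u}\overline{\chi(xu)}$, where $u$ runs over the $p$-elements of $\C_G(x)$. Restricting $\chi$ to $\C_G(x)$ and decomposing it as $\sum\lambda(x)/\lambda(1)\cdot\lambda$ on the central element $x$, this reduces to the standard fact that $\sum_{u\in H_p}\alpha(u)$ is divisible by $|H|_p$ in $\OO_\pp$ for a character $\alpha$ of $H=\C_G(x)$. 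That fact follows from Brauer's characterization, or from counting $p$-elements via the Frobenius theorem, as in \cite[Chapter~3]{NagaoTsushima89}. The remaining factor $|\C_G(x)|_{p'}$ is a $\pp$-unit. I would cite this block-theoretic fact rather than reprove it.
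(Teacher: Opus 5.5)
Your proof is correct, and it takes a different route from the paper's. The paper works through $p$-Brauer characters. Each $\theta\in\IBr_p(G)$ is a $\ZZ$-combination of restrictions of ordinary characters, so restricting to $\pi$-classes gives $\Theta=M\Phi$ with $M$ integral. The linear independence of the reductions $\theta^*$ over $\OO/\pp$ then forces $\rank\Phi^*=\ell$.

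You never use Brauer characters and never reduce modulo $\pp$. Instead you write down an explicit left inverse $E=BD$ of $\Phi$ over $\OO_\pp$. Here $B=[b_{x\chi}]$ holds the coefficients of the $p$-section indicators $\theta_x$, $D$ is the $\pi$-decomposition matrix, and the key observation is that, since $p\in\pi'$, the $p$-section of $x$ meets $G^0$ exactly in $x^G$.

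The paper's version needs nothing beyond two textbook facts about Brauer characters. Yours gives more, namely an explicit formula for $\Phi^{-1}$ over $\OO_\pp$, but it rests entirely on the $\pp$-integrality of $\langle\theta_x,\chi\rangle$. Your reduction of this, via the central character of $x$, to $\sum_u\alpha(u)\in|H|_p\OO_\pp$ (with $u$ running over the $p$-elements of $H=\C_G(x)$) is fine. However, Frobenius' theorem on the number of $p$-elements is only the case $\alpha=1_H$, so it does not justify the general statement.

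A clean, self-contained justification in the spirit of \cref{lem:virtual} is Brauer's characterization at $p$:
\begin{itemize}
\item some $n1_G$ with $p\nmid n$ is a $\ZZ$-combination of induced characters $\psi_i^G$ from $p$-elementary subgroups $E_i=P_i\times Q_i$;
\item hence $n\theta_x=\sum_i(\psi_i\,(\theta_x)_{E_i})^G$ for suitable virtual characters $\psi_i$;
\item and $(\theta_x)_{E_i}=1_{P_i}\times f_i$, where $f_i$ is a $\{0,1\}$-valued class function on the $p'$-group $Q_i$, so its coefficients lie in $|Q_i|^{-1}\OO\subseteq\OO_\pp$.
\end{itemize}

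Finally, drop the remark that $|\C_G(x)|_p\theta_x$ is a virtual projective character. It is false whenever $p$ divides $|\C_G(x)|$, because $\theta_x$ does not vanish at the $p$-singular elements $xu$ with $1\neq u$ a $p$-element of $\C_G(x)$. What is true is the analogous statement for the indicator of the single class $x^G$. Your argument never uses this remark, so nothing else is affected.
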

\index{$p$ @ $\pp$ : Prime Ideal}
\begin{proof}
  Let $\IBr_p(G)$ denote the set of irreducible $p$-Brauer characters of $G$.
  Recall that every $\theta \in \IBr_p(G)$ is an integer linear combination of restrictions of $\chi \in \Irr(G)$ to $p'$-elements by \cite[Corollary~2.16]{Navarro98}.
  \index{$\theta$ : Brauer Character}
  \index{$IBr_p(G)$ @ $\IBr_p(G)$ : Set}
  Hence restriction to $\pi$-elements defines a homomorphism
  \[
    \bigoplus_{\theta \in \IBr_p(G)} \ZZ \theta \to \bigoplus_{\phi \in \I_\pi(G)} \ZZ \phi.
  \]
  Let $\Theta = [\theta(x)]_{\theta \in \IBr_p(G), x \in \cl(G^0)}$.
  \index{$\Theta$ : Matrix}
  Then $\Theta = M\Phi$ for some integer matrix $M$.
  \index{$M$ : Matrix}

  Let $\OO \to \OO/\pp$, $\lambda \mapsto \lambda^*$ denote the canonical homomorphism.
  \index{$\lambda$ : Algebraic Number}
  \index{$*$ : Reduction}
  In what follows, we also use $*$ to denote reduction modulo $\pp$.
  Since $\pi$-elements are $p'$-elements and $\{\, \theta^* \mid \theta \in \IBr_p(G) \,\}$ is linearly independent over the residue field $\OO/\pp$ by \cite[Theorem~1.19]{Navarro98}, we have $\rank \Theta^* = \ell$.
  From
  \[
    \ell = \rank \Theta^* = \rank M^*\Phi^* \leq \rank \Phi^* \leq \ell,
  \]
  it follows that $\Phi^* \in \GL_\ell(\OO/\pp)$ and $\det \Phi \in \OO \setminus \pp$.
  Hence $\Phi \in \GL_\ell(\OO_\pp)$.
\end{proof}

\begin{lem}
  \label{lem:components}
  For a prime $p \in \pi'$,
  \[
    A_{(p)} \cong \bigoplus_{x \in \cl(G^0)} \ZZ/|\C_G(x)|_p\ZZ.
  \]
\end{lem}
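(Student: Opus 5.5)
The plan is to compute $A_{(p)}$ after base change to $\OO_\pp$, where the character table $\Phi$ becomes unimodular by \cref{lem:unimodular}. The Cartan matrix can then be transformed into $\Delta$.

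\textbf{Step 1: express $C$ through $\Phi$ and $Y$.} The relation $\eta_\phi^0 = \sum_\mu c_{\phi\mu}\mu$ evaluated at $x \in \cl(G^0)$ gives
\[
  Y = C\Phi .
\]
The second orthogonality relation (\cref{lem:second}), written in matrix form over class representatives, gives
\[
  \Phi^{\mathsf T}\,\overline{Y} = \Delta .
\]
Since $C$ is a real symmetric integer matrix, substituting the first identity into the second yields $\Phi^{\mathsf T} C \overline{\Phi} = \Delta$.

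\textbf{Step 2: pass to $\OO_\pp$.} Complex conjugation permutes the columns of $\Phi$. This is because $\overline{\phi(x)} = \phi(x^{-1})$ and $x^{-1}$ is again a $\pi$-element, so $\overline{\Phi} = \Phi P$ for a permutation matrix $P$. Hence $\overline{\Phi} \in \GL_\ell(\OO_\pp)$ as well, by \cref{lem:unimodular}. Therefore $C$ and $\Delta$ are equivalent over $\OO_\pp$. Over the discrete valuation ring $\OO_\pp$, the entry $|\C_G(x)|$ is associate to $|\C_G(x)|_p$, because its $p'$-part is a unit. So
\[
  \operatorname{coker}_{\OO_\pp}(C) \cong \bigoplus_{x \in \cl(G^0)} \OO_\pp / |\C_G(x)|_p \OO_\pp .
\]

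\textbf{Step 3: descend from $\OO_\pp$ to $\ZZ_{(p)}$.} This is the step needing care. Over $\ZZ_{(p)}$, the matrix $C$ is equivalent to a diagonal matrix $\diag(p^{e_1},\dots,p^{e_\ell})$ by Smith normal form. Here the entries are not zero, because $C$ has full rank by Step 1, since $\det\Delta \ne 0$. Tensoring with $\OO_\pp$ gives the diagonal matrix $\diag(p^{e_i})$ over $\OO_\pp$. Let $v$ be the normalized valuation of $\OO_\pp$ and $e = v(p)$ the ramification index. The elementary divisors over the DVR $\OO_\pp$ are unique, so the multisets $\{e\,e_i\}$ and $\{e\,v_p(|\C_G(x)|)\}$ coincide. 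Dividing by $e$, the multiset $\{e_i\}$ equals $\{v_p(|\C_G(x)|)\}$.

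Since $A_{(p)} \cong A\otimes\ZZ_{(p)} = \operatorname{coker}_{\ZZ_{(p)}}(C)$, as noted in the proof of \cref{lem:decomposition}, this gives
\[
  A_{(p)} \cong \bigoplus_{x \in \cl(G^0)} \ZZ/|\C_G(x)|_p\ZZ .
\]
The main points to check are the conjugation invariance of $\Phi$ in Step 2 and the uniqueness of elementary divisors under the extension in Step 3.
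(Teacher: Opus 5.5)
Your proof is correct and follows the paper's route. You derive $\Phi^{\mathsf T} C\,\overline{\Phi}=\Delta$, which is equivalent to the paper's $\Phi^\dagger C\Phi=\Delta$, use \cref{lem:unimodular} to get $C$ equivalent to $\Delta$ over $\OO_\pp$, and then descend to $\ZZ_{(p)}$. The only differences are in detail, and both versions are valid. You handle conjugation through the columns ($\overline{\Phi}=\Phi P$ via $x\mapsto x^{-1}$) instead of through the permutation of $\I_\pi(G)$. You also descend by a Smith form over $\ZZ_{(p)}$ and a comparison of $\pp$-adic valuations (dividing by the ramification index), where the paper compares determinantal divisors using $n\OO_\pp\cap\QQ=n\ZZ_{(p)}$.
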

\begin{proof}
  Let $\Phi^\dagger$ denote the conjugate transpose of $\Phi$.
  From $Y = C \Phi$ and $\Phi^\dagger Y = \Delta$ by the second orthogonality relation (\cref{lem:second}),
  \[
    \Phi^\dagger C \Phi = \Delta.
  \]
  Complex conjugation permutes $\I_\pi(G)$, and so $\det \Phi^\dagger = \pm \det \Phi$.
  By \cref{lem:unimodular}, $C$ is equivalent over $\OO_\pp$ to $\Delta$.
  
  Let $\delta_k(C)$ denote a greatest common divisor of all the minors of order $k$ of $C$.
  \index{$k$ : Integer}
  \index{$\delta_k(C)$ : Integer}
  It follows from \cite[VII, \S4, No.~6, Corollary~2 and Proposition~6]{Bourbaki90} that
  \[
    \delta_k(C) \OO_\pp = \delta_k(\Delta) \OO_\pp \qquad (1 \leq k \leq \ell).
  \]
  Intersecting both sides with $\QQ$ yields
  \[
    \delta_k(C) \ZZ_{(p)} = \delta_k(\Delta) \ZZ_{(p)} \qquad (1 \leq k \leq \ell)
  \]
  as $n\OO_\pp \cap \QQ = n\ZZ_{(p)}$ for an integer $n$.
  \index{$Q$ @ $\QQ$ : Rationals}
  \index{$n$ : Integer}
  By \cite[VII, \S4, No.~6, Corollary~2 and Proposition~6]{Bourbaki90} again, $C$ is equivalent  over $\ZZ_{(p)}$ to $\Delta$.
  Therefore,
  \[
    A_{(p)} \cong \bigoplus_{x \in \cl(G^0)} \ZZ_{(p)}/|\C_G(x)|\ZZ_{(p)} \cong \bigoplus_{x \in \cl(G^0)} \ZZ/|\C_G(x)|_p\ZZ.
    \qedhere
  \]
\end{proof}

\subsection*{Main theorem}
Combining the primary decomposition (\cref{lem:decomposition}) and primary components (\cref{lem:components}) yields the proof of \cref{main:thm}.
\begin{proof}[Proof of \cref{main:thm}]
  Recall that the Cartan matrix $C$ represents the restriction
  \[
    \bigoplus_{\phi \in \I_\pi(G)} \ZZ \eta_\phi \to \bigoplus_{\mu \in \I_\pi(G)} \ZZ \mu.
  \]
  By \cref{lem:decomposition} and \cref{lem:components},
  \begin{align*}
    A
    \cong \bigoplus_{p \in \pi'}  \bigoplus_{x \in \cl(G^0)} \ZZ/|\C_G(x)|_p\ZZ
    \cong \bigoplus_{x \in \cl(G^0)} \ZZ/|\C_G(x)|_{\pi'}\ZZ.
  \end{align*}
  Hence $C$ is equivalent over the integers to $\diag(|\C_G(x)|_{\pi'})$.

  Also the above shows $|\det C| = \prod |\C_G(x)|_{\pi'}$ by \cite[VII, \S4, No.~7, Corollary~3]{Bourbaki90}.
  Since $C$ is positive definite by \eqref{eq:cartan}, $\det C = \prod |\C_G(x)|_{\pi'}$.
\end{proof}

\begin{rmk}
  It may be worth noting that, from the above proofs,
  \[
    \det \Phi^\dagger \Phi = \prod_{x \in \cl(G^0)} |\C_G(x)|_\pi.
  \]
\end{rmk}

\appendix
\section{Example}
We illustrate our results with an example.
Consider the dihedral group
\[
  G = \langle\, r, s \mid r^{105} = s^2 = 1,\ r^s = r^{-1} \,\rangle
\]
of order $210 = 2\cdot3\cdot5\cdot7$ and $\pi = \{2, 5\}$;
note that $|G|_\pi$ and $|G|_{\pi'}$ are not prime powers.
\index{$r$ : Group Element}
\index{$s$ : Group Element}
The irreducible characters and the irreducible $\pi$-partial characters of $G$ are given in \cref{tbl:Irr(G),tbl:I_pi(G)}.
\begin{table}[ht]
  \centering
  \caption{$\Irr(G)$ ($1 \leq i, j \leq 52$, $\zeta = e^{2\pi\sqrt{-1}/105}$).}
  \label{tbl:Irr(G)}
  \begin{tabular}{cccc}
    \toprule
             & $1^G$ & $s^G$ & $(r^j)^G$ \\
    \midrule
    $\chi_1$ & $1$   & $1$   & $1$ \\
    $\chi_2$ & $1$   & $-1$  & $1$ \\
    $\psi_i$ & $2$   & $0$   & $\zeta^{ij} + \zeta^{-ij}$ \\
    \bottomrule
  \end{tabular}
\end{table}
\index{$i$ : Integer}
\index{$j$ : Integer}
\index{$\zeta$ : Algebraic Number}
\index{$e$ : Transcendental Number}
\index{$\pi$ : Transcendental Number}
\index{$\chi_1, \chi_2$ : Ordinary Character}
\index{$\psi_n$ : Ordinary Character}
\begin{table}[ht]
  \centering
  \caption{$\I_\pi(G)$ ($\tau = 2\cos \frac{\pi}{5} = \frac{1 + \sqrt{5}}{2}$).}
  \label{tbl:I_pi(G)}
  \begin{tabular}{ccccc}
    \toprule
              & $1^G$ & $s^G$ & $(r^{21})^G$ & $(r^{42})^G$\\
    \midrule
    $\phi_1$  & $1$   & $1$   & $1$          & $1$ \\
    $\phi_2$  & $1$   & $-1$  & $1$          & $1$ \\
    $\mu_1$   & $2$   & $0$   & $-1 + \tau$  & $-\tau$ \\
    $\mu_2$   & $2$   & $0$   & $-\tau$      & $-1 + \tau$ \\
    \bottomrule
  \end{tabular}
\end{table}
\index{$\tau$ : Algebraic Number}
\index{$\phi_1, \phi_2$ : Partial Character}
\index{$\mu_1, \mu_2$ : Partial Character}
Then $\chi_1^0 = \phi_1$, $\chi_2^0 = \phi_2$,
\[
  \psi_i^0 =
  \begin{cases}
    \phi_1 + \phi_2 & (i \equiv 0 \mod 5) \\
    \mu_1           & (i \equiv \pm 1 \mod 5) \\
    \mu_2           & (i \equiv \pm 2 \mod 5)
  \end{cases}
\]
and hence
\[
  \left\{
  \begin{alignedat}{2}
      \eta_{\phi_1}^0 &= \chi_1^0 + \sum_{i \equiv 0} \psi_i^0 & &= 11 \phi_1 + 10 \phi_2 \\
      \eta_{\phi_2}^0 &= \chi_2^0 + \sum_{i \equiv 0} \psi_i^0 & &= 10 \phi_1 + 11 \phi_2 \\
      \eta_{\mu_1}^0  &= \sum_{i \equiv \pm 1} \psi_i^0        & &= 21 \mu_1 \\
      \eta_{\mu_2}^0  &= \sum_{i \equiv \pm 2} \psi_i^0        & &= 21 \mu_2.
  \end{alignedat}
  \right.
\]
Thus the relevant matrices are
\[
  C =
  \begin{bmatrix}
    11 & 10 &    &    \\
    10 & 11 &    &    \\
       &    & 21 &    \\
       &    &    & 21
  \end{bmatrix},
  \qquad
  \Phi =
  \begin{bmatrix}
    1 &  1 & 1         & 1 \\
    1 & -1 & 1         & 1 \\
    2 &  0 & -1 + \tau & -\tau \\
    2 &  0 & -\tau     & -1 + \tau
  \end{bmatrix},
\]
\[
  \Delta =
  \begin{bmatrix}
    210 &   &     &   \\
        & 2 &     &   \\
        &   & 105 &   \\
        &   &     & 105
  \end{bmatrix}
\]
and we obtain the desired identities
\[
  \det C = 9261 = 21 \cdot 1 \cdot 21 \cdot 21 = \prod_{x \in \cl(G^0)} |\C_G(x)|_{\pi'},
\]
\[
  \det \Phi^\dagger \Phi = 500 = 10 \cdot 2 \cdot 5 \cdot 5 = \prod_{x \in \cl(G^0)} |\C_G(x)|_\pi.
\]
Note that the entries of the matrix $\Phi$ even lie in a number field $K = \QQ(\sqrt{5})$ with the ring of integers $\OO = \ZZ[\tau]$.
From
\[
  \det \Phi = 2 \cdot (\sqrt{5})^3,
\]
the matrix $\Phi$ is unimodular over $\OO_\pp$ for $\pp = 3\OO$ and $\pp = 7\OO$, but not over $\OO$.

\section*{Acknowledgments}
This collaboration originated at ``Representations of Finite Groups and Ramifications: A Conference in Honor of Michel Brou\'e’s 80th Birthday and His Fundamental Contributions'' (Shenzhen International Center for Mathematics, May 25--29, 2026).
The authors thank the organizers and acknowledge their financial support.

\printbibliography

@article {Brauer41,
    AUTHOR = {Brauer, Richard},
     TITLE = {On the {C}artan invariants of groups of finite order},
   JOURNAL = {Ann. of Math. (2)},
  FJOURNAL = {Annals of Mathematics. Second Series},
    VOLUME = {42},
      YEAR = {1941},
     PAGES = {53--61},
       DOI = {10.2307/1968986},
}

@article {BrauerNesbitt41,
    AUTHOR = {Brauer, R. and Nesbitt, C.},
     TITLE = {On the modular characters of groups},
   JOURNAL = {Ann. of Math. (2)},
  FJOURNAL = {Annals of Mathematics. Second Series},
    VOLUME = {42},
      YEAR = {1941},
     PAGES = {556--590},
       DOI = {10.2307/1968918},
}

@book {Bourbaki90,
    AUTHOR = {Bourbaki, N.},
     TITLE = {Algebra. {II}. {C}hapters 4--7},
    SERIES = {Elements of Mathematics (Berlin)},
 PUBLISHER = {Springer-Verlag, Berlin},
      YEAR = {1990},
}

@article {IiyoriKiyotaSawabe26,
    AUTHOR = {Iiyori, N. and Kiyota, M. and Sawabe, M.},
     TITLE = {Elementary divisors of matrices obtained from the character table of finite groups},
   JOURNAL = {J. Algebraic Combin.},
  FJOURNAL = {Journal of Algebraic Combinatorics. An International Journal},
    VOLUME = {64},
      YEAR = {2026},
       DOI = {10.1007/s10801-026-01576-x},
}

@article {Isaacs84,
    AUTHOR = {Isaacs, I. M.},
     TITLE = {Characters of {$\pi$}-separable groups},
   JOURNAL = {J. Algebra},
  FJOURNAL = {Journal of Algebra},
    VOLUME = {86},
      YEAR = {1984},
     PAGES = {98--128},
       DOI = {10.1016/0021-8693(84)90058-9},
}

@article {Isaacs86,
    AUTHOR = {Isaacs, I. M.},
     TITLE = {Fong characters in {$\pi$}-separable groups},
   JOURNAL = {J. Algebra},
  FJOURNAL = {Journal of Algebra},
    VOLUME = {99},
      YEAR = {1986},
     PAGES = {89--107},
       DOI = {10.1016/0021-8693(86)90056-6},
}

@book {Isaacs18,
    AUTHOR = {Isaacs, I. Martin},
     TITLE = {Characters of solvable groups},
    SERIES = {Graduate Studies in Mathematics},
    VOLUME = {189},
 PUBLISHER = {American Mathematical Society, Providence, RI},
      YEAR = {2018},
}

@book {NagaoTsushima89,
    AUTHOR = {Nagao, Hirosi and Tsushima, Yukio},
     TITLE = {Representations of finite groups},
 PUBLISHER = {Academic Press, Inc., Boston, MA},
      YEAR = {1989},
}

@book {Navarro98,
    AUTHOR = {Navarro, G.},
     TITLE = {Characters and blocks of finite groups},
    SERIES = {London Mathematical Society Lecture Note Series},
    VOLUME = {250},
 PUBLISHER = {Cambridge University Press, Cambridge},
      YEAR = {1998},
}

@book {Serre77,
    AUTHOR = {Serre, Jean-Pierre},
     TITLE = {Linear representations of finite groups},
    SERIES = {Graduate Texts in Mathematics},
    VOLUME = {42},
 PUBLISHER = {Springer-Verlag, New York-Heidelberg},
      YEAR = {1977},
}

\end{document}